\newtheorem{thrm}{Theorem}
\newtheorem{lemma}[thrm]{Lemma}
\newtheorem{prop}[thrm]{Proposition}
\newtheorem{cor}[thrm]{Corollary}
\theoremstyle{definition}
\newtheorem*{defn}{Definition}
\newtheorem{ex}{Example}
\DeclareMathOperator{\coker}{coker}
\DeclareMathOperator{\Hom}{Hom}
\DeclareMathOperator{\End}{End}
\DeclareMathOperator{\im}{im}
\DeclareMathOperator{\md}{mod}
\DeclareMathOperator{\Sub}{Sub}
\begin{document}

\newcommand{\N}{\mathbb{N}}
\newcommand{\M}{\mathcal{M}}
\newcommand{\Z}{\mathbb{Z}}
\newcommand{\R}{\mathbb{R}}
\newcommand{\C}{\mathbb{C}}
\newcommand{\Q}{\mathbb{Q}}
\newcommand{\matr}[1]{\left(\begin{matrix}#1\end{matrix}\right)}
\newcommand{\smatr}[1]{\left(\begin{smallmatrix}#1\end{smallmatrix}\right)}

\renewcommand{\r}{\mathfrak{r}}
\renewcommand{\P}{\mathcal{P}}
\renewcommand{\le}[1]{\leq_{\mathrm{#1}}}

\title{Module Degenerations and Finite Field Extensions}
\author{Nils M. Nornes}

\begin{abstract}
Degeneration of modules is usually defined geometrically, but due to results of Zwara and Riedtmann we can also define it in purely homological terms. This homological definition also works over fields that are not algebraically closed. Let $k$ be a field, $K$ a finite extension of $k$ and $\Lambda$ a $k$-algebra. Then any $K\otimes_k\Lambda$-module is also a $\Lambda$-module. We study how the isomorphism classes, degeneration and hom-order differ depending on whether we work over $\Lambda$ or $K\otimes_k\Lambda$.
\end{abstract}
\maketitle
\section{Introduction}
Let $k$ be a field, $K$ a normal finite field extension of $k$ and  $Q$  a quiver. Since $K$-vector spaces are also $k$-vector spaces and all $K$-linear maps are $k$-linear, any $K$-representation of $Q$ is also a $k$-representation. But, since not all $k$-linear maps are $K$-linear, two nonisomorphic $K$-representations may be isomorphic as $k$-representations.

 \begin{ex}\label{Kron1}
 Let $Q$ be the Kronecker quiver and consider the $\C Q$-modules 
 $$M: \xymatrix{\C\ar@<1ex>[r]^1\ar[r]_i&\C},\qquad N:\xymatrix{\C\ar@<1ex>[r]^1\ar[r]_{-i}&\C}.$$
 $M$ and $N$ are not isomorphic as $\C Q$-modules, but if we view them as $\R Q$-modules, there is an isomorphism given by complex conjugation.
 \end{ex}

More generally, if $\Lambda$ is a $k$-algebra, then two $K\otimes_k\Lambda$-modules may be isomorphic in $\md\Lambda$, the category of finite-dimensional $\Lambda$-modules, but nonisomorphic in $\md K\otimes_k\Lambda$.

When we need to specify which algebra two modules are isomorphic over, we will add a superscript to the isomorphism sign, e.g. $M\simeq^{\R Q} N$.

The $\Lambda$-isomorphism class of a given $K\otimes_k\Lambda$-module splits into a number of $K\otimes_k\Lambda$-isomorphism classes.  
In section \ref{isosection} we give a complete description of these isomorphism classes.

 Since isomorphism classes depend on which algebra we are working over, so do the degeneration order and the $\Hom$-order. 

Degeneration of modules is usually defined geometrically. For a natural number $d$ and a $k$-algebra $\Lambda$, let $\md_d\Lambda$ be the set of algebra homomorphisms from $\Lambda$ to $\M_d(k)$, the ring of $d\times d$-matrices with entries in $k$. Given a homomorphism $\mu\in\md\Lambda$ we can make a module structure on $k^d$. For any $\lambda\in\Lambda,x\in k^d$, we define  $\lambda x:=\mu(\lambda)\cdot x$, where $x$ is viewed as a column vector and the multiplication on the right hand side is just matrix multiplication. This lets us identify $\md_d\Lambda$ with the set of $\Lambda$-module structures on $k^d$. The set $\md_d\Lambda$ is actually an affine variety, and we say that a module $M$ degenerates to a module $N$ if $N$ is in the closure of the isomorphism class of $M$. 

This definition only works when $k$ is algebraically closed, and in this paper we want to look at other fields. In \cite{Zwara}, G. Zwara showed that there is an equivalent module theoretic way to describe degeneration, and we will use this description as the definition.

\begin{defn}
Let $M$ and $N$ be modules in $\md \Lambda$. $M$ degenerates to $N$ if there exists a module $X\in\md\Lambda$ and an exact sequence
$$\xymatrix{0\ar[r]&X\ar[r]&X\oplus M\ar[r]&N\ar[r]&0}.$$
We denote this by $M\le{deg} N$. An exact sequence of the above form is called a Riedtmann sequence.
\end{defn}

This definition works for any field. With this definition it is not obvious that $\le{deg}$ is a partial order, but this was shown by G. Zwara in \cite{Zwara2}.

The degeneration order does not behave nicely with respect to cancellation of common direct summands, so in \cite{Riedtmann} C. Riedtmann introduced another order.

\begin{defn}
Let $M$ and $N$ be $\Lambda$-modules. $M$ virtually degenerates to $N$ if there exists $Z\in\md\Lambda$ such that $M\oplus Z\le{deg}N\oplus Z$. We denote this by $M\le{vdeg} N$. 
\end{defn}

$M\le{deg}N$ clearly implies $M\le{vdeg}N$, but for some algebras the virtual degeneration is strictly finer. This was first shown by an example due to J. Carlson (see \cite{Riedtmann}). 

The last partial order we want to study in this paper is the Hom-order, which is based on the dimensions of Hom-spaces. We will denote the $k$-dimension of $\Hom_\Lambda(M,N)$ by ${}_\Lambda[M,N]$.

\begin{defn}
Given two $\Lambda$-modules $M$ and $N$, $M\le{Hom} N$ if ${}_\Lambda [X,M]\leq {}_\Lambda[X,N]$ for all $X\in\md\Lambda$ (or, equivalently, if ${}_\Lambda [M,X]\leq {}_\Lambda[N,X]$ for all $X\in\md\Lambda$).
\end{defn}

The relation  $\le{\Hom}$ is clearly reflexive and transitive. In \cite{Aus}, M. Auslander showed that if $M\not\simeq N$ then there exists an $X\in\md\Lambda$ such that ${}_\Lambda[X,M]\neq{}_\Lambda[X,N]$, which shows that $\le{\Hom}$ is also antisymmetric. 

$M\le{vdeg}N$  implies $M\le{Hom}N$, but it is not known if $\le{Hom}$ is strictly finer.

However, if the algebra is representation-finite, all three  orders are the same. This was shown for algebras over algebraically closed fields by G. Zwara in \cite{zwararepfin} and generalized to arbitrary artin algebras by S. O. Smal\o{} in \cite{sverremilano}.

As with isomorphisms, we add a superscript when we need to specify which algebra we are considering.

In section \ref{posection} we give several examples where $\le{deg}^\Lambda$ differs from $\le{deg}^{K\otimes_k\Lambda}$. We also give some examples of modules $M,N$ where $M\oplus M$ degenerates to $N\oplus N$ but $M$ does not degenerate to $N$. For some algebras $\Lambda$ the $K\otimes_k\Lambda$-isomorphism classes are the same as the $\Lambda$-isomorphism classes. We show that in these cases $\le{Hom}^{K\otimes_k\Lambda}$ and $\le{Hom}^\Lambda$ are also the same.

In section \ref{ringsection} we show that if the endomorphism ring of a module is a division ring, then the module is minimal in the degeneration- and Hom-orders.

For background on representation theory of algebras we refer the reader to \cite{ARS}. For an introduction to degenerations of modules, see \cite{sverremilano}.

\section{Isomorphism classes}\label{isosection}

Let $k$ be a field, $K$ a separable finite extension of $k$ and $\Lambda$ a $k$-algebra. Let $\Gamma=K\otimes_k\Lambda$. For any $\Lambda$-module $M$ we give $K\otimes_k M$ a $\Gamma$-module structure by $(x\otimes\lambda)\cdot(y\otimes m)=xy\otimes\lambda m$.  Since $\Lambda$ is a subring of $\Gamma$ any $\Gamma$-module is also a $\Lambda$-module.

Furthermore, any $\Gamma$-homomorphism is a $\Lambda$-homomorphism, so $X\simeq^\Gamma Y$ implies $X\simeq^\Lambda Y$. But, as Example \ref{Kron1} shows, the reverse implication does not hold.

In Example \ref{Kron1} we see that the $\R Q$-isomorphism class of $M$ contains two $\C Q$-isomorphism classes, and one is in some sense a complex conjugate of the other. On the other hand,  the $\R Q$-isomorphism class of the module
$$X_a: \xymatrix{\C\ar@<1ex>[r]^1\ar[r]_a&\C}$$ contains only one $\C Q$-isomorphism class if $a\in\R$.  If $a$ is not real it has two $\C Q$-isomorphism classes. Note also that when $a$ is real there exists a $\R Q$-module $Y_a$ such that $X_a\simeq^{\C Q}\C\otimes_\R Y_a$, whereas when $a$ is not real there is no such $\R Q$-module.
 
Similarly,  for any indecomposable $\C Q$-module $M$ its $\R Q$-isomorphism class contains either one or two $\C Q$-isomorphism classes. When there are two, a module in the second class can be constructed from $M$ by complex conjugation. 
 
 More generally, if $K$ is a normal extension of $k$ of degree $n$, the $\Lambda$-isomorphism class of an indecomposable $\Gamma$-module splits into at most $n$ $\Gamma$-isomorphism classes, and they are related by $k$-automorphisms of $K$.

Given a $k$-automorphism of $K$ and a $\Gamma$-module $M$, we can construct a $\Gamma$-module that is $\Lambda$-isomorphic to $M$ in the following way.

 Let $\phi$ be a $k$-automorphism on $K$, and let $M$ be a $\Gamma$-module. We construct a new $\Gamma$-module $M^\phi$ by setting $M^\phi=M$ as $k$-spaces, and letting the multiplication be given by $(x\otimes\lambda)\cdot_{M^\phi} m=(\phi(x)\otimes \lambda)\cdot_M m$. Now the identity on $M$ gives us a $\Lambda$-isomorphism $\hat\phi:M^\phi\to M$, where for any $x\in K$ and $m\in M$ we have $\hat{\phi}(xm)=\phi(x)\hat{\phi}(m)$.
 
When $K$ is a normal extension of $k$, let $G(K/k)$ denote its Galois group.
 
We are now ready to prove the main result of this section.

\begin{thrm}
Let $M\in\md\Gamma$. The multiplication map $$\mu_M:K\otimes_kM\to M$$$$x\otimes m\mapsto xm$$ is a split epimorphism of $\Gamma$-modules.

Furthermore, if $K$ is a normal extension of $k$, then we have $$K\otimes_k M\simeq^\Gamma \bigoplus_{\phi\in G(K/k)}M^\phi.$$
\end{thrm}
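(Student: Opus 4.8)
The plan is to prove the two assertions in turn, using separability of $K/k$ for the splitting and the full Galois hypothesis only for the decomposition. First I would check that $\mu_M$ really is a morphism in $\md\Gamma$: since $K$ is commutative, the elements $x\otimes 1$ and $1\otimes\lambda$ commute in $\Gamma$, so for $y\otimes\lambda\in\Gamma$ and $x\otimes m\in K\otimes_k M$ both $\mu_M((y\otimes\lambda)(x\otimes m))$ and $(y\otimes\lambda)\mu_M(x\otimes m)$ reduce to $yx(\lambda m)$. Surjectivity is immediate from $\mu_M(1\otimes m)=m$, so the content is the existence of a $\Gamma$-section.

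For the section I would appeal to separability of $K$ as a $k$-algebra, which furnishes a separability idempotent $e=\sum_i a_i\otimes b_i\in K\otimes_k K$ satisfying $\sum_i a_i b_i=1$ and $(x\otimes 1)e=(1\otimes x)e$ for every $x\in K$. Define $s\colon M\to K\otimes_k M$ by $s(m)=\sum_i a_i\otimes b_i m$. Then $\mu_M(s(m))=\sum_i a_i b_i m=m$, so $s$ splits $\mu_M$ once we know it is $\Gamma$-linear. The $\Lambda$-linearity is routine because $b_i$ and $\lambda$ act on $M$ through the commuting copies of $K$ and $\Lambda$ inside $\Gamma$; the $K$-linearity is exactly where the idempotent relation is used, since applying the $k$-linear map $a\otimes b\mapsto a\otimes bm$ to $(x\otimes1)e=(1\otimes x)e$ turns the desired identity $s(xm)=x\,s(m)$ into the image of that relation. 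This establishes the first statement, and notably uses separability but not normality.

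For the second statement I would assume $K/k$ Galois (separable plus normal) with $G=G(K/k)$, and define $\beta\colon K\otimes_k M\to\bigoplus_{\phi\in G}M^\phi$ componentwise by $\beta_\phi(x\otimes m)=\phi(x)m$, where $\phi(x)m$ denotes the internal $K$-action on $M$. Checking that $\beta_\phi$ lands $\Gamma$-linearly in $M^\phi$ is a direct computation: the external action $c\cdot(x\otimes m)=cx\otimes m$ is sent to $\phi(c)\phi(x)m=c\cdot_{M^\phi}\beta_\phi(x\otimes m)$ because the $K$-action on $M^\phi$ is twisted by $\phi$, while the $\Lambda$-action is untouched and commutes with $K$. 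A dimension count gives $\dim_k(K\otimes_k M)=n\dim_k M=\dim_k\bigoplus_\phi M^\phi$ with $n=[K:k]=|G|$, so it suffices to prove $\beta$ injective.

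The one genuinely nontrivial step is this injectivity, and it is where the Galois hypothesis pays off. Writing a general element as $\sum_i e_i\otimes m_i$ for a fixed $k$-basis $e_1,\dots,e_n$ of $K$, the condition $\beta=0$ reads $\sum_i\phi(e_i)m_i=0$ for all $\phi\in G$. The coefficient matrix $(\phi(e_i))_{\phi,i}\in\M_n(K)$ is invertible: its Gram-type product has entries $\sum_\phi\phi(e_ie_j)=\Tr_{K/k}(e_ie_j)$, and nondegeneracy of the trace form for a separable extension forces $\det(\phi(e_i))\neq0$ (equivalently, Dedekind's independence of the automorphisms in $G$). Since $M$ is a $K$-vector space, invertibility of this matrix forces every $m_i=0$, so $\beta$ is injective and hence an isomorphism. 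I expect this Dedekind-determinant argument to be the main obstacle, everything else being bookkeeping with the commuting actions of $K$ and $\Lambda$ inside $\Gamma$.
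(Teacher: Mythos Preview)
Your argument is correct, but the route differs from the paper's in both halves.

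For the splitting, you write down the section directly from a separability idempotent $e=\sum a_i\otimes b_i$, checking $\Gamma$-linearity by hand via the relation $(x\otimes1)e=(1\otimes x)e$. The paper instead invokes projectivity of $K$ over $K\otimes_kK$ to get a splitting $\nu$ of $\mu:K\otimes_kK\to K$, builds $\nu_\Gamma$ on $\Gamma$ from it, verifies naturality with respect to right multiplications, and then passes to arbitrary $M$ via a free presentation. The underlying input is the same (your idempotent is just $\nu(1)$), but your approach is shorter and more explicit, while the paper's is more functorial.

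For the decomposition, the difference is more substantial. You build a single map $\beta:K\otimes_kM\to\bigoplus_\phi M^\phi$ with components $\beta_\phi(x\otimes m)=\phi(x)m$, and prove it is an isomorphism by a dimension count plus injectivity, the latter coming from invertibility of the matrix $(\phi(e_i))$ over $K$ (Dedekind independence / nondegeneracy of the trace form). The paper instead reuses part one: it embeds each $M^\phi$ into $K\otimes_kM$ via $\iota_\phi=(1\otimes\hat\phi)\circ\nu_{M^\phi}$ and then shows the images $\im\iota_\phi$ pairwise intersect trivially by exploiting the $K\otimes_kK\otimes_k\Lambda$-action. Your argument is independent of part one and arguably cleaner; the paper's ties the two statements together through the splitting. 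Both are valid.
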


\begin{proof}
We prove the first part by constructing a splitting of $\mu_M$. 

 Let $\mu:K\otimes_kK\to K$ be the map given by $\mu(x\otimes y)=xy$. This is a $K\otimes_kK$-module epimorphism. Since $K$ is separable we have by Lemma 9.2.8 and Theorem 9.2.11 in \cite{Weibel} that $K$ is a projective $K\otimes_kK$-module, and thus $\mu$ splits. Let $\nu:K\to K\otimes_kK$ be a splitting of $\mu$.

We first consider $M=\Gamma=K\otimes_k\Lambda$. 
 $K\otimes_k\Gamma=K\otimes_kK\otimes_k\Lambda$ is a $\Gamma$-module with multiplication $(x\otimes\lambda)\cdot (y\otimes z\otimes \kappa)=xy\otimes z\otimes \lambda\kappa$. 
Let $\nu_\Gamma:K\otimes_k\Lambda\to K\otimes_kK\otimes_k\Lambda$ be given by $\nu_\Gamma(x\otimes\lambda)=\nu(x)\otimes\lambda$. This is a $\Gamma$-module homomorphism and a splitting of $\mu_\Gamma$. We now show that for any $f\in\Hom_\Gamma(\Gamma,\Gamma)$ the following diagram commutes.
$$\xymatrix{\Gamma\ar[r]^f\ar[d]^{\nu_\Gamma}&\Gamma\ar[d]^{\nu_\Gamma}\\K\otimes_k\Gamma\ar[r]^{K\otimes_k f}&K\otimes_k\Gamma}$$
The homomorphism $f$ is given by right multiplication with an element in $\Gamma$, and it is enough to check that the diagram commutes for all generators of $\Gamma$. Let $x,y\in K$, $\alpha,\beta\in\Lambda$ and $f=-\cdot y\otimes \beta$. Then $\nu_\Gamma f(x\otimes\alpha)=\nu_\Gamma(xy\otimes\alpha\beta)=\nu(xy)\otimes\alpha\beta$ and $K\otimes_kf\nu_\Gamma(x\otimes\alpha)=K\otimes_kf(\nu(x)\otimes\alpha)=\nu(x)\otimes\alpha\cdot1\otimes y\otimes\beta$, and since $\nu$ is a $K\otimes_k K$-homomorphism we have $ \nu(x)\otimes\alpha\cdot1\otimes y\otimes\beta=\nu(xy)\otimes\alpha\beta$, so the diagram commutes.

For a free $\Gamma$-module $\Gamma^n$ let $\nu_{\Gamma^n}$ be given by $\nu_{\Gamma^n}((\gamma_1,\ldots,\gamma_n))=(\nu_\Gamma(\gamma_1),\ldots,\nu_\Gamma(\gamma_n))$. Then for any $f\in\Hom_\Gamma(\Gamma^a,\Gamma^b)$ the following diagram commutes.
$$\xymatrix{\Gamma^a\ar[r]^f\ar[d]^{\nu_{\Gamma^a}}&\Gamma\ar[d]^{\nu_{\Gamma^b}}\\K\otimes_k\Gamma^a\ar[r]^{K\otimes_k f}&K\otimes_k\Gamma^b}$$

For an arbitrary $M$, let $$\xymatrix{\Gamma^a\ar[r]^f&\Gamma^b\ar[r]^g&M}$$ be a free presentation. Then we construct $\nu_M$ from the commutative diagram
$$\xymatrix{\Gamma^a \ar[r]^f\ar[d]^{\nu_{\Gamma^a}}&\Gamma^b\ar[r]^g\ar[d]^{\nu_{\Gamma^b}}&M \ar@{.>}[d]^{\nu_M}\\K\otimes_k\Gamma^a\ar[r]^{K\otimes_kf}&K\otimes_k\Gamma^b\ar[r]^{K\otimes_kg}&K\otimes_kM}.$$
Since 
$$\xymatrix{\Gamma^b\ar[r]^g&M\\K\otimes_k\Gamma^b\ar[r]^{K\otimes_kg}\ar[u]^{\mu_{\Gamma^b}}&M\ar[u]^{\mu_M}}$$
also commutes and $\nu_\Gamma$ is a splitting of $\mu_\Gamma$, $\nu_M$  is a splitting of $\mu_M$.

Let $\phi\in G(K/k)$. Then we have a $\Gamma$-isomorphism $1\otimes\hat\phi:K\otimes M^\phi\to K\otimes M$, so $M^\phi$ is a summand of $K\otimes_kM$ and the composition $\iota_\phi:=1\otimes\hat\phi\circ \nu_{M^\phi}$ is the inclusion. Let $\theta\neq\phi$ be another element in $G(K/k)$, and let $m\in \im\iota_\phi\cap \im \iota_\theta$. Now we view $K\otimes_k M$ as a $K\otimes_k K\otimes_k \Lambda$-module. Since $m$ is in $\im\iota_\phi$, we have for any $x\in K$ that $x\otimes 1\otimes 1 \cdot m=1\otimes \phi(x)\otimes 1 \cdot m$. Thus we get $$1\otimes(\phi(x)-\theta(x))\otimes 1\cdot m=0$$ for all $x\in K$, which means that $m=0$. Thus $\im\iota_\phi\cap \im\iota_\theta=(0)$, so $M^\phi$ and $M^\theta$ are distinct summands. If $K$ is normal it follows that $$K\otimes_kM\simeq\bigoplus_{\phi\in G(K/k)}M^\phi.$$

\end{proof}

When $K$ is normal, this gives us a complete description of the $\Gamma$-modules that are $\Lambda$-isomorphic to a given $\Gamma$-module. 

\begin{cor}
Let $K$ be a normal extension of $k$, and let $M_1,\ldots,M_r$ be indecomposable $K\otimes_k\Lambda$-modules. If $M\simeq^\Lambda M_1\oplus\ldots\oplus M_r$, then there exist $\phi_1,\ldots,\phi_r\in G(K/k)$ such that $M\simeq^{K\otimes_k\Lambda} M_1^{\phi_1}\oplus\ldots\oplus M_r^{\phi_r}$.
\end{cor}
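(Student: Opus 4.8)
The plan is to decompose $M$ as a $\Gamma$-module, apply extension of scalars, and compare with the decomposition of the $M_i$ supplied by the theorem. Since $\Gamma$ is again a finite-dimensional algebra, Krull--Schmidt holds in $\md\Gamma$, so I can write $M\simeq^\Gamma N_1\oplus\cdots\oplus N_s$ with each $N_j$ indecomposable over $\Gamma$. The assignment $L\mapsto K\otimes_kL$, with the $\Gamma$-action of the theorem, is a functor $\md\Lambda\to\md\Gamma$: for a $\Lambda$-linear $f$ the map $1\otimes f$ is $\Gamma$-linear, a routine check. In particular it carries the $\Lambda$-isomorphism $M\simeq^\Lambda\bigoplus_iM_i$ to a $\Gamma$-isomorphism $K\otimes_kM\simeq^\Gamma\bigoplus_iK\otimes_kM_i$. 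Moreover the $\Gamma$-structure on $K\otimes_kM$ involves only the $\Lambda$-action on $M$, so this $K\otimes_kM$ is exactly the module occurring in the theorem.

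Applying the second part of the theorem, valid since $K$ is normal and separable and hence Galois with $G=G(K/k)$, to $M$ and to each $M_i$, and using $M\simeq^\Gamma\bigoplus_jN_j$, I obtain a $\Gamma$-isomorphism $\bigoplus_{j=1}^s\bigoplus_{\phi\in G}N_j^\phi\simeq^\Gamma\bigoplus_{i=1}^r\bigoplus_{\phi\in G}M_i^\phi$. The twist $L\mapsto L^\phi$ is an automorphism of $\md\Gamma$ with inverse $L\mapsto L^{\phi^{-1}}$, so every $N_j^\phi$ and every $M_i^\phi$ is indecomposable, and Krull--Schmidt in $\md\Gamma$ yields an equality of the two multisets of isomorphism classes of indecomposables appearing on the two sides.

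To extract the $\phi_i$ I would pass to Galois orbits. The group $G$ acts on the isomorphism classes of indecomposable $\Gamma$-modules by $[L]\mapsto[L^\phi]$; writing $\mathcal O(L)$ for the orbit of $[L]$, the passage to orbits is $G$-invariant, so it sends each inner sum $\bigoplus_{\phi\in G}L^\phi$ to $n=|G|$ copies of the single orbit $\mathcal O(L)$. Applying this to the multiset identity and cancelling the factor $n$ gives $\sum_{j=1}^s\mathcal O(N_j)=\sum_{i=1}^r\mathcal O(M_i)$ as multisets of orbits. In particular $s=r$, and for every orbit the number of $M_i$ lying in it equals the number of $N_j$ lying in it.

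Finally I fix, orbit by orbit, a bijection between the indices $i$ with $M_i$ in a given orbit and the indices $j$ with $N_j$ in that orbit, which is possible because the two counts agree. For a matched pair $(i,j)$ the modules $M_i$ and $N_j$ lie in the same $G$-orbit, so there is $\phi_i\in G(K/k)$ with $M_i^{\phi_i}\simeq^\Gamma N_j$. Collecting these over all orbits produces a permutation of $\{1,\dots,r\}$ together with $\phi_1,\dots,\phi_r$ satisfying $\bigoplus_iM_i^{\phi_i}\simeq^\Gamma\bigoplus_jN_j\simeq^\Gamma M$, as required. I expect the main obstacle to be exactly this last bookkeeping: Krull--Schmidt by itself only tells me that each indecomposable $\Gamma$-summand of $M$ is some twist $M_i^\phi$, and to upgrade this to a decomposition in which each $M_i$ is twisted exactly once I must control multiplicities and possibly nontrivial stabilizers, which is precisely what the orbit count accomplishes.
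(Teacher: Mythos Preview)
Your proof is correct and follows the route the paper leaves implicit: the corollary is stated without proof right after the theorem, and your argument---tensoring up with $K$, applying the theorem to both $M$ and each $M_i$, and then matching indecomposable summands via Krull--Schmidt and a $G$-orbit count---is precisely the natural way to spell out the deduction. One cosmetic remark: the paper does not assume $\Lambda$ is finite-dimensional, so $\Gamma$ need not be either, but Krull--Schmidt still holds in $\md\Gamma$ because the modules themselves have finite length.
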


When $K$ is not normal, this does not hold. Then $\Lambda$-isomorphisms do not even preserve the number of indecomposable $\Gamma$-summands.
 
 \begin{ex} 
 Let $K=\Q(\alpha)$ where $\alpha$ is a root of $X^3-2$. $K$ is not a normal extension of $\Q$, and it has no nontrivial $\Q$-automorphisms. $K\otimes_\Q K$ as a module over itself decomposes to $K\oplus L$, and $L\simeq K^2$ as $K$-modules, but not as $K\otimes_\Q K$-modules. In fact $L$ is an indecomposable $K\otimes_\Q K$-module.
 
 \end{ex}
 
\section{Partial orders}\label{posection}

Given two $\Gamma$-modules $M$ and $N$, we can ask if $M$ degenerates to $N$ as a $\Gamma$-module, but also if $M$ degenerates to $N$  as a $\Lambda$-module.
 
 If we have $M\le{deg}^\Gamma N$, then there is an exact sequence of $\Gamma$-modules 
 $$\xymatrix{0\ar[r]&X\ar[r]&X\oplus M\ar[r]&N\ar[r]&0}.$$
 This is also an exact sequence of $\Lambda$-modules, so we also have $M\le{deg}^\Lambda N$. 
 
We have already seen examples where $M\simeq^\Lambda N$ but $M\not\simeq^\Gamma N$. These examples also show that $\Lambda$-degeneration does not imply $\Gamma$-degeneration. There are also proper $\Lambda$-degenerations that are not $\Gamma$-degenerations.

\begin{ex}
Consider the algebra $$\Lambda=\matr{\C&\C\\0&\R}\subseteq \M_2(\C).$$ This is a hereditary $\R$-algebra corresponding to the Dynkin graph $B_2$. We have that $\C\otimes_\R\Lambda\simeq\C Q$ as $\C$-algebras, where $Q$ is the quiver
$$Q:\xymatrix{ 1 & 2\ar[l]_\alpha \ar[r]^\beta & 3},$$
via the isomorphism $f: \C\otimes_\R\Lambda\to \C Q$ given by $f(1\otimes\smatr{1&0\\0&0})=(e_1+e_3)$, $f(1\otimes\smatr{i&0\\0&0})=i(e_1-e_3)$, $f(1\otimes\smatr{0&0\\0&1})=e_2$, $f(1\otimes\smatr{0&1\\0&0})=(\alpha+\beta)$ and $f(1\otimes\smatr{0&i\\0&0})=i(\alpha-\beta)$.

The simple $\C Q$-modules $S_1$ and $S_3$ are isomorphic as $\Lambda$-modules. 

The $\C Q$-modules
$$I_1:\xymatrix{\C&\C\ar[l]_1\ar[r]&0},\qquad I_3:\xymatrix{0&\C\ar[l]\ar[r]^1&\C}$$
are also isomorphic as $\Lambda$-modules. $I_1$ degenerates to $S_2\oplus S_3$ as a $\Lambda$-module, but not as a $\C Q$-module, and the same holds for $I_3\le{deg}^\Lambda S_2\oplus S_1$.
\end{ex}

In the above example all $\Lambda$-degenerations of $\Gamma$-modules can be decomposed into  $\Gamma$-degenerations and   $\Lambda$-isomorphisms. That is, for any modules $M,N\in\md\Gamma$ such that $M\le{deg}^\Lambda N$, there exist $M',N'\in\md\Gamma$ such that $M\simeq^\Lambda M'\le{deg}^\Gamma N'\simeq^\Lambda N$. This does not hold for all algebras.

\begin{ex}
Let $Q$ be the quiver $$Q:\xymatrix{\bullet\ar@<1ex>[r]\ar[r]&\bullet\ar@<1ex>[r]\ar[r]&\bullet}$$ and let $\Lambda=\R Q$ and $\Gamma= \C Q$. Consider the modules given by the following representations:
$$A:\xymatrix{0\ar@<1ex>[r]\ar[r]&\C\ar@<1ex>[r]^{\smatr{1\\0}}\ar[r]_{\smatr{0\\1}}&\C^2},\quad B:\xymatrix{\C^3\ar@<1ex>[r]^{\smatr{1&0&0\\0&1&0}}\ar[r]_{\smatr{0&1&0\\0&0&1}}&\C^2\ar@<1ex>[r]^{\smatr{1&0\\0&1}}\ar[r]_{\smatr{i&0\\0&i}}&\C^2}$$
$$C:\xymatrix{\C^3\ar@<1ex>[r]^{\smatr{1&0&0}}\ar[r]_{\smatr{0&1&0}}&\C\ar@<1ex>[r]\ar[r]&0},\quad X:\xymatrix{0\ar@<1ex>[r]\ar[r]&\C\ar@<1ex>[r]^{\smatr{1}}\ar[r]_{\smatr{i}}&\C}$$
Now there is an exact sequence $\xymatrix{0\ar[r]&A\ar[r]&B\ar[r]&C\ar[r]&0}$ in $\md\Lambda$, so we have $B\le{deg}^\Lambda A\oplus C$. However, we have ${}_\Gamma[X,A\oplus C]=1<{}_\Gamma[X,B]=2$, so $B\not\le{deg}^\Gamma A\oplus C$. Letting $\phi$ denote complex conjugation we also have $A^\phi\simeq^\Gamma A$, $C^\phi\simeq^\Gamma C$ and ${}_\Gamma[X,B^\phi]=2$, so there are no modules $M$ and $N$ such that $M\simeq^\Lambda B\le{deg}^\Gamma A\oplus C\simeq^\Lambda N$.
\end{ex}

$\Lambda$-isomorphisms do not preserve $\Gamma$-degenerations, and two $\Lambda$-isomorphic modules can behave quite differently in the $\Gamma$-degeneration order. For example, minimality is not preserved.
\begin{ex}
Let $Q$ be the Kronecker quiver and let $\Lambda=\R Q$ and $\Gamma=\C\otimes_\R\Lambda$. Let $M$, $N$ and $N'$ be the modules given by 
$$M:\xymatrix{\C^2\ar@<1ex>[r]^{\smatr{1&0\\0&1}}\ar[r]_{\smatr{i&0\\1&i}}&\C^2}$$
$$N:\xymatrix{\C^2\ar@<1ex>[r]^{\smatr{1&0\\0&1}}\ar[r]_{\smatr{i&0\\0&i}}&\C^2}.$$
$$N':\xymatrix{\C^2\ar@<1ex>[r]^{\smatr{1&0\\0&1}}\ar[r]_{\smatr{i&0\\0&-i}}&\C^2}$$

We have that $M\le{deg}^\Gamma N$, but $N'$ is minimal in the degeneration order of $\md \Gamma$  and $N\simeq^\Lambda N'$.
\end{ex}

For some algebras, e.g. $\Lambda=kQ$ where $Q$ is a simply laced Dynkin quiver, the isomorphism classes in $\md\Lambda$ and $\md K\otimes_k\Lambda$ are the same. It seems likely that in these cases the degeneration order should also be the same. The Hom-order is indeed the same.

\begin{thrm}\label{iso-hom}
Let $\Lambda$ be a $k$-algebra and $\Gamma=K\otimes_k\Lambda$. The following are equivalent:
\begin{enumerate}
\item \label{IsoDeg1} $M\simeq^\Lambda N\iff M\simeq^\Gamma N$ for all $M,N\in\md\Gamma$.
\item \label{IsoDeg2} $M\le{Hom}^\Lambda N\iff M\le{Hom}^\Gamma N$ for all $M,N\in\md\Gamma$.
\end{enumerate}
\end{thrm}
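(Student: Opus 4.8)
The plan is to route everything through two elementary facts linking $\md\Lambda$ and $\md\Gamma$. First, the induction functor $K\otimes_k-$ is left adjoint to restriction, so there is a natural isomorphism $\Hom_\Lambda(X,M)\cong\Hom_\Gamma(K\otimes_kX,M)$ for every $X\in\md\Lambda$ and $M\in\md\Gamma$. Second, choosing a $k$-basis $x_1,\dots,x_n$ of $K$ (with $n=[K:k]$) identifies $K\otimes_kM$ with $\bigoplus_i x_i\otimes M$ as $\Lambda$-modules, so $K\otimes_kM\simeq^\Lambda M^n$ for every $M\in\md\Gamma$. Together with the antisymmetry of $\le{Hom}$ (Auslander's result quoted above), these will suffice; no geometry or normality of $K/k$ is needed.

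For the implication (2)$\Rightarrow$(1), the direction $\simeq^\Gamma\Rightarrow\simeq^\Lambda$ is automatic, so only $M\simeq^\Lambda N\Rightarrow M\simeq^\Gamma N$ must be shown. If $M\simeq^\Lambda N$ then ${}_\Lambda[X,M]={}_\Lambda[X,N]$ for all $X\in\md\Lambda$, whence both $M\le{Hom}^\Lambda N$ and $N\le{Hom}^\Lambda M$ hold. Applying (2) to each gives $M\le{Hom}^\Gamma N$ and $N\le{Hom}^\Gamma M$, and antisymmetry of $\le{Hom}^\Gamma$ forces $M\simeq^\Gamma N$. This half is short and uses only that the Hom-order is a genuine partial order.

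For (1)$\Rightarrow$(2) the crux is the numerical identity ${}_\Lambda[Y,M]=n\cdot\dim_k\Hom_\Gamma(Y,M)$, valid for all $Y,M\in\md\Gamma$. To obtain it, combine the second fact above with (1): the $\Lambda$-isomorphism $K\otimes_kY\simeq^\Lambda Y^n$ has both sides in $\md\Gamma$, so (1) upgrades it to $K\otimes_kY\simeq^\Gamma Y^n$. Feeding this into the adjunction gives $\Hom_\Lambda(Y,M)\cong\Hom_\Gamma(K\otimes_kY,M)\cong\Hom_\Gamma(Y,M)^n$, which is the claimed identity. From here the biconditional follows: the direction $\le{Hom}^\Gamma\Rightarrow\le{Hom}^\Lambda$ holds unconditionally, since for arbitrary $X\in\md\Lambda$ the adjunction gives ${}_\Lambda[X,M]=\dim_k\Hom_\Gamma(K\otimes_kX,M)$ and $M\le{Hom}^\Gamma N$ controls these dimensions with $Y=K\otimes_kX$; conversely, $M\le{Hom}^\Lambda N$ in particular tests all $Y\in\md\Gamma\subseteq\md\Lambda$, and cancelling the uniform factor $n$ in the identity turns ${}_\Lambda[Y,M]\le{}_\Lambda[Y,N]$ into $\dim_k\Hom_\Gamma(Y,M)\le\dim_k\Hom_\Gamma(Y,N)$, i.e. $M\le{Hom}^\Gamma N$.

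I expect the main obstacle to be the bookkeeping around test objects and constants: $\le{Hom}^\Lambda$ quantifies over all $X\in\md\Lambda$ while $\le{Hom}^\Gamma$ quantifies only over $Y\in\md\Gamma$, so I must confirm that restricting the $\Lambda$-test to induced modules $K\otimes_kX$ loses nothing and that the identity ${}_\Lambda[Y,M]=n\cdot\dim_k\Hom_\Gamma(Y,M)$ is genuinely uniform in both variables, so the factor $n$ cancels cleanly. The one delicate point to check is that it is precisely condition (1) that promotes $K\otimes_kY\simeq^\Lambda Y^n$ to a $\Gamma$-isomorphism, thereby making $\Hom_\Gamma(K\otimes_kY,M)\cong\Hom_\Gamma(Y,M)^n$ hold over $\Gamma$ and not merely over $\Lambda$; everything else is formal.
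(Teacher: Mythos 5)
Your proof is correct and follows essentially the same route as the paper: the implication (2)$\Rightarrow$(1) via antisymmetry of $\le{Hom}$ is identical, and (1)$\Rightarrow$(2) rests on the same numerical identity ${}_\Lambda[Y,M]=n\cdot{}_\Gamma[Y,M]$ for $Y,M\in\md\Gamma$, obtained by using (1) to promote $K\otimes_kY\simeq^\Lambda Y^n$ to a $\Gamma$-isomorphism. The only (cosmetic) difference is that you derive this identity from the adjunction $\Hom_\Gamma(K\otimes_kY,M)\cong\Hom_\Lambda(Y,M)$, whereas the paper uses the base-change formula $\Hom_\Gamma(K\otimes_kX,K\otimes_kM)\simeq K\otimes_k\Hom_\Lambda(X,M)$ and applies (1) in both arguments; your version is arguably cleaner about the constant factor.
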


\begin{proof}
We always have that $M\simeq^\Gamma N\implies M\simeq^\Lambda N$. If $M\simeq^\Lambda N$, then we have $M\le{Hom}^\Lambda N$ and $N\le{Hom}^\Lambda M$. Assuming that \ref{IsoDeg2} holds, we then have $M\le{Hom}^\Gamma N$ and $N\le{Hom}^\Gamma M$, and thus $M\simeq^\Gamma N$. This shows that \ref{IsoDeg2} implies \ref{IsoDeg1}.
 
 Now assume that \ref{IsoDeg1} holds.

For any $k$-algebra $R$ and $R$-modules $A$ and $B$ we have $\Hom_{K\otimes_k R}(K\otimes_k A,K\otimes_k B)\simeq_K K\otimes_k \Hom_R(A,B)$. Thus for any $\Gamma$-modules $X$ and $M$ we have ${}_\Gamma[K\otimes_kX,K\otimes_kM]=n\cdot{}_\Lambda[X,M]$, where $n$ is the degree of $K$. But given \ref{IsoDeg1} we also have ${}_\Gamma[K\otimes_kX,K\otimes_kM]={}_\Gamma[X^n,M^n]=n^2\cdot{}_\Gamma[X,M]$, so we get $n\cdot{}_\Gamma[X,M]={}_\Lambda[X,M]$. It follows that $M\le{Hom}^\Lambda N$ implies $M\le{Hom}^\Gamma N$. 

Assume that $M\le{Hom}^\Gamma N$. For every $\Lambda$-module $X$ we have $n\cdot {}_\Lambda[X,M]={}_\Gamma[K\otimes_k X, M]\leq {}_\Gamma[K\otimes_k X,N]=n\cdot {}_\Lambda[X,M]$, and thus $M\le{Hom}^\Lambda N$.

\end{proof}

This leaves the question of whether the same result holds for degenerations and virtual degenerations. If $\Lambda$ satisfies the statements of Theorem \ref{iso-hom}, do we also have that $\le{deg}^\Lambda$ and $\le{deg}^\Gamma$ are the same?

For  representation-finite algebras all three orders are the same, so in that case the answer is yes. It looks like all algebras that satisfy the statements of Theorem \ref{iso-hom} are representation-finite, so one option is to try to prove that.

Another possible way to prove Theorem \ref{iso-hom} for degenerations is to use a Riedtmann sequence in $\md\Lambda$ to construct a Riedtmann sequence in $\md\Gamma$.

When isomorphism classes are the same, we have that for any short exact sequence 
$$\xymatrix{0\ar[r]&A\ar[r]&B\ar[r]&C\ar[r]&0}$$ in $\md\Lambda$ there is a short exact sequence
$$\xymatrix{0\ar[r]&A^n\ar[r]&B^n\ar[r]&C^n\ar[r]&0}$$ in $\md\Gamma$, obtained by applying $K\otimes_k-$. It seems like this should imply that there is a short exact sequence $$\xymatrix{0\ar[r]&A\ar[r]&B\ar[r]&C\ar[r]&0}$$ in $\md\Gamma$ as well, and thus that $M^n\le{deg}N^n$ implies $M\le{deg}N$. Unfortunately this is not true in general.

The next example, which is a variant of the Carlson example mentioned in the introduction, shows that $M^n\le{deg}N^n$ does not imply $M\le{deg}N$.
\begin{ex}
Let $\Lambda$ be the exterior $k$-algebra in two variables $X$ and $Y$. Let $f\in\Lambda$ be an element of degree 1, i.e. $f=aX+bY$ for some $a,b\in k$, and let $(f)$ be the submodule of $\Lambda$ generated by $f$. There is an exact sequence of $\Lambda$-modules $$\xymatrix{0\ar[r]&(f)\ar[r]&\Lambda\ar[r]&(f)\ar[r]&0},$$ which shows that $\Lambda\le{deg}(f)^2$. If $g\in\Lambda$ is another element of degree 1, then we have $\Lambda^2\le{deg}((f)\oplus (g))^2$. However, by Theorem 5.4 in \cite{SmalVal} we have $\Lambda\le{deg}(f)\oplus (g)$ if and only if $(f)\simeq (g)$. As in the original Carlson example, we have $\Lambda\le{vdeg}(f)\oplus (g)$ for all $f,g$.
\end{ex}

Adding a suitable $K$-structure, this shows that we may have $M\le{deg}^\Lambda N$ and $M\le{vdeg}^\Gamma N$ without having $M\le{deg}^\Gamma N$.

We give one more example of this, due to S. Oppermann and S. O. Smal\o{}. Here we also see that we can have a monomorphism from $A^2$ to $B^2$ without having any monomorphisms from $A$ to $B$.

\begin{ex}\label{smalopp}
Let $\Lambda$ be the exterior $k$-algebra in three variables $X$, $Y$ and $Z$. Let $\r$ be its radical and $S$ the simple $\Lambda$-module. The $\Lambda$-homomorphism $f:(\Lambda/\r^2)^2\to (\r/\r^3)^2$ given by right multiplication with the matrix $\smatr{X&Y\\Y&Z}$ is a monomorphism, but there are no $\Lambda$-monomorphisms from $\Lambda/\r^2$ to $\r/\r^3$. Since $\coker f$ is semisimple, we have an exact sequence $$\eta:\xymatrix{0\ar[r]&(\Lambda/\r^2)^2\ar[r]&(\r/\r^3)^2\ar[r]&(S^2)^2\ar[r]&0},$$
but there is no exact sequence $$\xymatrix{0\ar[r]&\Lambda/\r^2\ar[r]&\r/\r^3\ar[r]&S^2\ar[r]&0}.$$
The exact sequence $\eta$ shows that $(\r/\r^3)^2\le{deg}(\Lambda/\r^2\oplus S^2)^2$, and thus $\r/\r^3\le{Hom}\Lambda/\r^2\oplus S^2$. 

We also have $\r/\r^3\le{vdeg}\Lambda/\r^2\oplus S^2$. There are exact sequences
$$\xymatrix{0\ar[r]&\Lambda/\r^2\ar[r]^{\smatr{X\\Z}}&\r/\r^3\oplus (Z)/\r^3\ar[r]&\r/(XZ)\ar[r]&0},$$
$$\xymatrix{0\ar[r]&(XZ)\ar[r]&\r/(XZ)\ar[r]&(XZ,YZ)\ar[r]&0},$$
$$\xymatrix{0\ar[r]&(YZ)\ar[r]&(XZ,YZ)\ar[r]&S\ar[r]&0},$$
$$\xymatrix{0\ar[r]&(Z)/\r^3\ar[r]&(XZ)\oplus(YZ)\ar[r]&S\ar[r]&0},$$
which show that $\r/\r^3\oplus (Z)/\r^3\le{deg}\r/(XZ)\oplus \Lambda/\r^2\le{deg}(XZ)\oplus (XZ,YZ)\oplus \Lambda/\r^2\le{deg} (XZ)\oplus(YZ)\oplus S\oplus \Lambda/\r^2\le{deg}\Lambda/\r^2\oplus S^2\oplus (Z)/\r^3$.

There is no degeneration though, as we will see from Proposition \ref{subdim} below.

\end{ex}

In both examples we have a virtual degeneration, so it is possible that $M^n\le{deg}N^n$ implies $M\le{vdeg}N$.
Note also that the exterior algebras do not satisfy the statements of Theorem \ref{iso-hom}. Thus it is still possible that in this more restricted case  $M^n\le{deg}N^n$ also implies $M\le{deg}N$.

To see that $\r/\r^3$ does not degenerate to $\Lambda/\r^2\oplus S^2$ in Example \ref{smalopp}, we will look at their submodules. The 4-dimensional submodule $\Lambda/\r^2\subseteq \Lambda/\r^2\oplus S^2$ is generated by one element. In $\r/\r^3$, on the other hand, any submodule generated by one element is at most 3-dimensional. This turns out to be impossible if we have a degeneration.

For a $\Lambda$-module $M$ and a natural number $i$, let $\Sub_i M$ be the set of submodules of $M$ that are generated by $i$ elements. We have a function $f_i:\md\Lambda\to\N$ for each $i$, given by $$f_i(M)=\max_{N\in\Sub_i M}\dim_k N.$$

\begin{prop}\label{subdim}
Let $k$ be an algebraically closed field and $\Lambda$ a finite-dimensional $k$-algebra. Let $M$ and $N$ be $\Lambda$-modules such that $M\le{deg}N$. Then $f_i(M)\geq f_i(N)$ for all $i$.
\end{prop}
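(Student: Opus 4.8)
The plan is to pass to the geometric description of degeneration, which is legitimate since $k$ is algebraically closed, and to exhibit each $f_i$ as a lower semicontinuous function on the module variety. First I would record that a degeneration preserves dimension: a Riedtmann sequence $0\to X\to X\oplus M\to N\to 0$ forces $\dim_k M=\dim_k N=:d$, so $M$ and $N$ correspond to points of the same variety $\md_d\Lambda$. By Zwara's theorem (\cite{Zwara}), over an algebraically closed field $M\le{deg}N$ is equivalent to the point representing $N$ lying in the closure of the $\GL_d(k)$-orbit $\mathcal{O}_M\subseteq\md_d\Lambda$ of the point representing $M$. Since $f_i$ depends only on the isomorphism type, it is constant, equal to $f_i(M)$, on all of $\mathcal{O}_M$.

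The heart of the argument is to show that for every $c$ the set $\{\mu\in\md_d\Lambda : f_i(\mu)\geq c\}$ is open. Fix a $k$-basis $\lambda_1,\dots,\lambda_m$ of $\Lambda$. For a module structure $\mu$ and a tuple $v=(v_1,\dots,v_i)\in(k^d)^i$, the submodule generated by $v_1,\dots,v_i$ is spanned by the vectors $\mu(\lambda_t)v_j$, so its dimension equals the rank of the $d\times(mi)$ matrix $A(\mu,v)$ whose columns are these vectors; hence $f_i(\mu)=\max_v\rk A(\mu,v)$. The entries of $A(\mu,v)$ are regular (indeed bilinear) functions of $(\mu,v)$.

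Now suppose $f_i(\mu_0)\geq c$, witnessed by a tuple $v^0$ with $\rk A(\mu_0,v^0)\geq c$, and choose a nonvanishing $c\times c$ minor of $A(\mu_0,v^0)$. Keeping the generators $v^0$ fixed and varying only $\mu$, this minor is a regular function of $\mu$ that is nonzero at $\mu_0$, hence nonzero on an open neighbourhood $U$ of $\mu_0$. For every $\mu\in U$ we then have $\rk A(\mu,v^0)\geq c$ and therefore $f_i(\mu)\geq c$, so $U\subseteq\{f_i\geq c\}$. This proves $\{f_i\geq c\}$ is open, i.e. $f_i$ is lower semicontinuous. Applying this with $c=f_i(M)+1$, the complement $\{\mu : f_i(\mu)\leq f_i(M)\}$ is closed and contains $\mathcal{O}_M$, hence contains $\overline{\mathcal{O}_M}$, which contains the point representing $N$; therefore $f_i(N)\leq f_i(M)$.

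The routine points are the identification of a generated submodule with the column span of $A(\mu,v)$ and the regularity of its minors. The one place demanding care is the direction of semicontinuity: one must fix the witnessing generators $v^0$ and perturb only the module structure $\mu$, since it is precisely this that makes the chosen nonvanishing minor persist and yields openness of $\{f_i\geq c\}$ rather than of its complement. Getting this direction right is what makes the inequality come out as $f_i(M)\geq f_i(N)$, matching the use of the proposition in Example \ref{smalopp}.
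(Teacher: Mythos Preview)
Your proof is correct and follows essentially the same approach as the paper: both identify the submodule generated by a fixed tuple with the column span of the matrix $A(\mu,v)$ and use lower semicontinuity of rank to show that $\{\mu:f_i(\mu)\leq m\}$ is closed, then conclude via the geometric description of degeneration. The only cosmetic difference is that the paper writes this closed set as the intersection $\bigcap_{\mathbf{x}}\phi_\mathbf{x}^{-1}(Z_m)$ of preimages of the closed rank-$\leq m$ locus, whereas you argue directly for openness of the complement by fixing a witnessing tuple and a nonvanishing minor.
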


\begin{proof}
We want to show that for any $m,d,i\in\N$, the set $\{X\in\md_d \Lambda|f_i(X)\leq m\}$ is closed in $\md_d\Lambda$. Let $\{\lambda_1,\ldots,\lambda_n\}$ be a basis for $\Lambda$. For any $i$-tuple $\mathbf{x}=(x_1,\ldots,x_i)$ of elements in $k^d$, we have a function $\phi_{\mathbf{x}}:\md_d\Lambda\to \M_{d\times ni}(k)$ given by 
$$Y \mapsto \matr{&&&&&\\Y(\lambda_1) x_1&\cdots&Y(\lambda_n)x_1&Y(\lambda_1) x_2&\cdots&Y(\lambda_n) x_i\\&&&&&}.$$
The columns of $\phi_{\mathbf{x}}(\rho)$ span the submodule of $Y$ generated by $\{x_1,\cdots, x_i\}$, thus the dimension of that submodule equals the rank of $\phi_{\mathbf{x}}(Y)$. Let $Z_m\subseteq\M_{d\times ni}(k)$ be the set of matrices with rank at most $m$. Then the set of modules where $\{x_1,\ldots,x_i\}$ generates an at most $m$-dimensional submodule is the inverse image of $Z_m$, and we have $$\{X\in\md_d \Lambda|f_i(X)\leq m\}=\bigcap_{\mathbf{x}\in(k^d)^i}\phi_\mathbf{x}^{-1}(Z_m),$$ which is closed since the maps $\phi_{\mathbf{x}}$ are continuous and $Z_m$ is closed.

Hence the closure of the isomorphism class of $M$ is contained in $\{\rho\in\md_d \Lambda|f_i(\rho)\leq f_i(M)\}$, so if $M\le{deg}N$ we have $f_i(N)\leq f_i(M)$.
\end{proof}

It follows immediately that we cannot have a degeneration in Example \ref{smalopp} if the field is algebraically closed. Even if the field is not closed, a degeneration is not possible.  If there were a degeneration, applying $\overline{k}\otimes_k-$ to its Riedtmann-sequence would show that $N=\overline{k}\otimes_k(\Lambda/\r^2\oplus S^2)$ is a degeneration of $M=\overline{k}\otimes_k \r/\r^3$. But $f_1(M)=3$ and $f_1(N)=4$, so by Proposition \ref{subdim} we have $M\not\le{deg} N$ and consequently $\r/\r^3\not\le{deg}\Lambda/\r^2\oplus S^2$.

\section{Endomorphism rings}
\label{ringsection}

Let $k$ be a field, $\Lambda$ a $k$-algebra and $M$ and $N$ $\Lambda$-modules such that $M\le{deg}N$. Since also $M\le{Hom} N$, we have ${}_\Lambda[M,M]\leq{}_\Lambda[M,N]$ and ${}_\Lambda[M,N]\leq{}_\Lambda[N,N]$, and thus  ${}_\Lambda[M,M]\leq{}_\Lambda[N,N]$. If $M\not\simeq N$, then this is a strict inequality. If $k$ is algebraically closed, this can be shown geometrically. For arbitrary fields it   can be seen from the following lemma, which is Lemma 5.3 from \cite{sverremilano}. 

\begin{lemma}\label{sverre}
Let  $M$ and $N$ be two nonisomorphic
 $\Lambda$-modules such that $M\le{\Hom}N$. Then we have ${}_\Lambda[N,M]<{}_\Lambda[N,N]$.
\end{lemma}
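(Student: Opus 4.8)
The plan is to prove the statement in the sharp contrapositive form: assuming $M\le{Hom}N$, I will show that the equality ${}_\Lambda[N,M]={}_\Lambda[N,N]$ forces $M\simeq N$, which contradicts the hypothesis $M\not\simeq N$. One inequality is free: taking $X=N$ in the defining condition of $M\le{Hom}N$ gives ${}_\Lambda[N,M]\leq{}_\Lambda[N,N]$, so only equality must be ruled out. First I would decompose $N\simeq\bigoplus_j N_j^{m_j}$ into indecomposables, with the $N_j$ pairwise nonisomorphic. Applying the defining inequality with $X=N_j$ gives ${}_\Lambda[N_j,M]\leq{}_\Lambda[N_j,N]$ for every $j$, and summing with multiplicities recovers ${}_\Lambda[N,M]\leq{}_\Lambda[N,N]$. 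Hence the assumed equality forces ${}_\Lambda[N_j,M]={}_\Lambda[N_j,N]$ for \emph{every} indecomposable summand $N_j$ of $N$; this is the local data I would work with.

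Next I would pass to the endomorphism ring. Put $E=\End_\Lambda(N)$ and $\r=\rad E$, so that $E/\r\simeq\prod_j\M_{m_j}(D_j)$ with $D_j=\End_\Lambda(N_j)/\rad\End_\Lambda(N_j)$. Let $I\subseteq E$ be the trace ideal of $\operatorname{add}M$ in $N$, i.e.\ the two-sided ideal spanned by all composites $N\xrightarrow{g}M\xrightarrow{f}N$. Then $1_N\in I$ exactly when $1_N$ factors through some $M^s$, which is equivalent to $N$ being a direct summand of $M^s$, i.e.\ to $N\in\operatorname{add}M$. By Nakayama's lemma $I=E$ is equivalent to $I+\r=E$; and since the image of $I$ in the semisimple ring $E/\r$ is a two-sided ideal, hence a product of blocks, the block indexed by $j$ is hit precisely when some composite $N_j\to M\to N_j$ is invertible modulo radical maps, that is, precisely when $N_j$ is a summand of $M$. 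Thus it suffices to show that each indecomposable summand $N_j$ of $N$ lies in $\operatorname{add}M$.

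The heart of the argument, and the step I expect to be the \emph{main obstacle}, is to promote the numerical equality ${}_\Lambda[N_j,M]={}_\Lambda[N_j,N]$, together with $M\le{Hom}N$, into the statement $N_j\in\operatorname{add}M$. Over an algebraically closed field one could argue with a generic homomorphism and Fitting's lemma, but to keep the argument field-independent I would instead compare $\Hom_\Lambda(N_j,M)$ and $\Hom_\Lambda(N_j,N)$ modulo radical maps, simultaneously using the equivalent dual inequalities ${}_\Lambda[M,X]\leq{}_\Lambda[N,X]$ to control maps out of $M$; the point to establish is that equality of these dimensions cannot be achieved unless $N_j$ already occurs as a summand of $M$. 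Granting this, every $N_j\in\operatorname{add}M$, so $N\in\operatorname{add}M$ and there is a split monomorphism $N\to M^s$. Feeding this back into the dual inequalities ${}_\Lambda[M,X]\leq{}_\Lambda[N,X]$ then pins down the multiplicities of the common indecomposable summands and forces $M\simeq N$, contradicting $M\not\simeq N$. Therefore ${}_\Lambda[N,M]<{}_\Lambda[N,N]$.
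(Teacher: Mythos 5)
The paper itself does not prove this lemma --- it is imported verbatim as Lemma~5.3 of \cite{sverremilano} --- so there is no in-paper argument to compare yours against; your proposal has to stand on its own, and it does not. The gap is exactly where you flag ``the main obstacle'': the claim that the numerical equality ${}_\Lambda[N_j,M]={}_\Lambda[N_j,N]$, together with $M\le{Hom}N$, forces $N_j\in\operatorname{add}M$ is never proved, only ``granted''. Worse, in the local form in which you state it (``equality of these dimensions cannot be achieved unless $N_j$ already occurs as a summand of $M$'') it is false. Take $\Lambda=kQ$ for $Q\colon 1\to 2$, let $M=P$ be the indecomposable module of dimension vector $(1,1)$, with $\soc P=S_2$, and let $N=S_1\oplus S_2$. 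The exact sequence $0\to S_2\to P\to S_1\to 0$ gives $P\le{deg}S_1\oplus S_2$, hence $P\le{Hom}S_1\oplus S_2$, and ${}_\Lambda[S_2,P]=1={}_\Lambda[S_2,S_1\oplus S_2]$, yet $S_2$ is not a direct summand of the indecomposable module $P$. So equality at a single indecomposable summand of $N$ carries no information about that summand lying in $\operatorname{add}M$; any correct argument must exploit the equality at \emph{all} summands of $N$ simultaneously, i.e.\ the full hypothesis ${}_\Lambda[N,M]={}_\Lambda[N,N]$. But ``that hypothesis plus $M\le{Hom}N$ implies $M\simeq N$'' is precisely the lemma, so the (correct, but purely formal) reductions you do carry out --- splitting $N$ into indecomposables, and the trace-ideal/Nakayama criterion for $N\in\operatorname{add}M$ --- consume none of the actual difficulty.

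There is a second, smaller gap in the endgame, which you also acknowledge: $N\in\operatorname{add}M$ only yields a split monomorphism $N\to M^s$, and passing from ``every $N_j$ is a summand of $M$'' to ``$M\simeq N\oplus M'$'' requires controlling multiplicities before the clean finish ${}_\Lambda[X,M]={}_\Lambda[X,N]+{}_\Lambda[X,M']\leq{}_\Lambda[X,N]\Rightarrow M'=0$ becomes available. As it stands the proposal is a correct reformulation of the problem plus an unproven (and, as stated locally, false) central claim; you should either supply a genuine proof of that step or simply cite \cite{sverremilano}, as the paper does.
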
 

It follows from Lemma \ref{sverre} that if $N$ is a proper $\Lambda$-degeneration of $M$, then $\End_\Lambda M$ must have strictly smaller dimension than $\End_\Lambda N$. Thus if $\End_\Lambda M$ is one-dimensional, $M$ cannot be a proper degeneration of anything.

If $k$ is algebraically closed, the only finite extension of $k$ is $k$ itself. In this case it is obvious that if $\End_\Lambda M$ is a field, then $M$ must be minimal in the Hom-order, and thus also in the degeneration order.
When $k$ is not algebraically closed, $\End_\Lambda M$ might be a field different from $k$. In this case, ${}_\Lambda[M,M]$ is greater than one, so it is not immediately obvious that $M$ should be minimal. However, it is.

\begin{prop}
Let $M$ be a $\Lambda$-module such that $\End_\Lambda (M)$ is a division ring. Then M is minimal in the Hom-order, and also in the degeneration order.
\end{prop}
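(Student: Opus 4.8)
The plan is to reduce the whole statement to a single clean claim: \emph{if $L$ is a $\Lambda$-module with $L\le{Hom}M$ and $L\not\simeq M$, then $L=0$}. This immediately yields both assertions. On the one hand it shows that no nonzero module lies strictly below $M$ in the Hom-order, so $M$ is Hom-minimal. On the other hand, since $M\le{deg}N$ implies $M\le{Hom}N$ and every degeneration preserves the $k$-dimension (a Riedtmann sequence $0\to X\to X\oplus L\to M\to 0$ forces $\dim_k L=\dim_k M$), the zero module is never a proper degeneration of a nonzero module; hence if $L\le{deg}M$ with $M\neq 0$ then $L\neq 0$, so $L\simeq M$, and $M$ is minimal in the degeneration order too. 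Thus everything comes down to controlling ${}_\Lambda[M,L]$.

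The key step uses Lemma \ref{sverre}. Write $D=\End_\Lambda(M)$, a finite-dimensional division ring over $k$. Since $L\le{Hom}M$ and $L\not\simeq M$, the lemma gives ${}_\Lambda[M,L]<{}_\Lambda[M,M]=\dim_k D$. Now $\Hom_\Lambda(M,L)$ carries a natural right $D$-module structure by precomposition, $g\cdot d=g\circ d$ for $g\in\Hom_\Lambda(M,L)$ and $d\in D$. Because $D$ is a division ring, this module is free, so $\dim_k\Hom_\Lambda(M,L)$ is an integer multiple of $\dim_k D={}_\Lambda[M,M]$. Combining divisibility by ${}_\Lambda[M,M]$ with $0\le {}_\Lambda[M,L]<{}_\Lambda[M,M]$ forces ${}_\Lambda[M,L]=0$.

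Finally I would invoke the second description of the Hom-order. Since $L\le{Hom}M$ is equivalent to ${}_\Lambda[L,X]\le{}_\Lambda[M,X]$ for all $X$, taking $X=L$ gives ${}_\Lambda[L,L]\le{}_\Lambda[M,L]=0$, so $\Hom_\Lambda(L,L)=0$ and therefore $L=0$. This establishes the claim and hence the proposition.

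The one genuinely delicate point—and the reason minimality is not obvious once $\End_\Lambda(M)$ is a field properly larger than $k$—is that the crude comparison of endomorphism-ring dimensions does not suffice: a division ring can have large $k$-dimension, so $M$ need not minimize $\dim_k\End$ among modules of its dimension. The trick that makes the argument go through, and which I expect to be the main obstacle to spot, is to promote the \emph{strict} inequality of Lemma \ref{sverre} to the \emph{vanishing} ${}_\Lambda[M,L]=0$ by using that $\Hom_\Lambda(M,L)$ is a vector space over the division ring $\End_\Lambda(M)$, whence its $k$-dimension is constrained to be a multiple of $\dim_k\End_\Lambda(M)$.
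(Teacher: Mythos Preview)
Your argument is correct and follows the same route as the paper: apply Lemma \ref{sverre} to get the strict inequality ${}_\Lambda[M,L]<{}_\Lambda[M,M]$, then use that $\Hom_\Lambda(M,L)$ is a free module over the division ring $\End_\Lambda(M)$ to force a divisibility constraint. The only cosmetic difference is that the paper phrases this as a direct contradiction from $0<{}_\Lambda[M,N]<{}_\Lambda[M,M]$ (implicitly restricting to nonzero $N$ of the same dimension), whereas you push the divisibility all the way to ${}_\Lambda[M,L]=0$ and then deduce $L=0$ via ${}_\Lambda[L,L]\le{}_\Lambda[M,L]$; your version is arguably cleaner in making explicit why the zero module is the only thing strictly below $M$.
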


\begin{proof} Assume there exists a module $N\not\simeq M$ such that $N\le{Hom} M$. By Lemma \ref{sverre} we have $0<{}_\Lambda[M,N]<{}_\Lambda[M,M]$. On the other hand, $\Hom_\Lambda(M,N)$ is a right $\End_\Lambda(M)$-module, which is free since $\End_\Lambda(M)$ is a division ring. Thus ${}_\Lambda[M,M]$ divides ${}_\Lambda[M,N]$, which is a contradiction. 

Hence $M$ is minimal in the Hom-order, and since the degeneration order is coarser than the Hom-order, $M$ is also minimal in the degeneration order.

\end{proof}

\end{document}